\titleformat*{\section}{\large\bfseries}
\theoremstyle{definition}
\newtheorem{theorem}{Theorem}[section]
\newtheorem{definition}[theorem]{Definition}
\newtheorem{lemma}[theorem]{Lemma}
\newtheorem{proposition}[theorem]{Proposition}
\newtheorem{remark}[theorem]{Remark}
\numberwithin{equation}{section}
\begin{document}
  \title{\Large{Series expansions by generalized Bessel functions for functions\\ related to the lattice point problems for the $p$-circle}}
  \author{Masaya Kitajima}
  \date{}
  \maketitle
  \begin{abstract}
The lattice point problems of the $p$-circle (for example, the astroid), which a generalized circle for positive real numbers $p$, have been solved for approximately $p$ more than $3$, based on the series representation of the error term using the generalized Bessel functions by E. Kr\"{a}tzel and the results of G. Kuba. On the other hand, for the cases $0<p<2$, the method via this series representation cannot make progress. Therefore, in such cases, it is necessary to consider another method.  In this paper, we prove that certain functions closely related to the problems can be displayed as series by newly generalized Bessel functions based on the property $p$-radial, generalization of spherical symmetry, and highlight the possibility that attempts to solve the problems via this display are suitable especially for the cases $0<p\leq1$. This study is based on the harmonic-analytic method by S. Kuratsubo and E. Nakai, using certain functions generalizing the error term of the circle problem by variables and series representation of the functions by the Bessel functions.\\
\textbf{Keywords:} Lattice points in specified regions, Lam\'{e}'s curves, Fourier series and coefficients in several variables, Bessel functions.\\
\textbf{2020 Mathematics Subject Classification:} 11P21, 42B05, 33C10.\\
  \end{abstract}\vspace{-15pt}
  
  \section{Introduction and main results}
  \hspace{13pt}For positive real numbers $p$ and $r$, let us focus on the lattice points inside the closed curves $p$-circle $\{x\in\mathbb{R}^{2}|\ |x_{1}|^{p}+|x_{2}|^{p}=r^{p}\}$ on the plane (those figures have several names, such as the Lam\'{e}'s curve and the superellipse, but in this paper we refer to such curves as the $p$-circle that $p$ is explicit. For examples of the figure, see the left part of Figure \ref{fig}). As shown in the right part of Figure \ref{fig}, the mosaic can be approximated by a set of non-intersecting unit squares centered at each lattice point inside the closed curves, which indicates that the area of this mosaic figure is equal to the number of lattice points $R_{p}(r)$ in the $p$-circle. Then, $P_{p}(r):=R_{p}(r)-\frac{2}{p}\frac{\Gamma^{2}(\frac{1}{p})}{\Gamma(\frac{2}{p})}r^{2}$ is considered as an error term of the approximation for the area (where the second term on the right-hand side is the area of the $p$-circle and $\Gamma(s)$ is the gamma function). \par
  Now, let us consider the lattice point problems of this closed $p$-circle (that is, the problems of finding values $\alpha_{p}$, which satisfy $P_{p}(r)=\mathcal{O}(r^{\alpha_{p}})$ and $P_{p}(r)=\Omega(r^{\alpha_{p}})$). Note that, for the functions $f$ and $g$, $f(t)=\mathcal{O}(g(t))$ and $f(t)=\Omega(g(t))$ respectively mean $\limsup_{t\to\infty}|\frac{f(t)}{g(t)}|<+\infty$ and $\liminf_{t\to\infty}|\frac{f(t)}{g(t)}|>0$. \par\vspace{5pt}
  In particular, in the case $p=2$, that is, when the closed curve is a circle, it is called the Gauss' circle problem (see \cite{Gauss}), and in 1915 G.H. Hardy\cite{Hardy-1915} showed $P_{2}(r)\neq\mathcal{O}(r^{\frac{1}{2}}),\ =\Omega(r^{\frac{1}{2}})$, and Hardy and E. Landau in \cite{Hardy-1917} conjectures that
  \begin{equation}\label{Hardy}
    P_{2}(r)=\mathcal{O}(r^{\frac{1}{2}+\varepsilon})\qquad \text{for any small }\varepsilon>0.
  \end{equation}
  This conjecture is based on the series expansion of the error term $P_{2}$ using the Bessel function 
  \begin{equation}\label{HI}
    P_{2}(r)=r\sum_{n=1}^{\infty}\frac{R(n)}{\sqrt{n}}J_{1}(2\pi\sqrt{n}r)\quad\text{for }0<r^{2}\not\in\mathbb{N},\ R(s):=\#\{m\in\mathbb{Z}^{2}|\ (m_{1})^{2}+(m_{2})^{2}=s\},
  \end{equation}
  which is well known as Hardy's identity\cite{Hardy-1915}, and the following representation by A. Ivic\cite{Ivic} guarantees, in a sense, the validity of the conjecture. 
  \begin{figure}[t]
    \centering
    \includegraphics[width=0.8\linewidth]{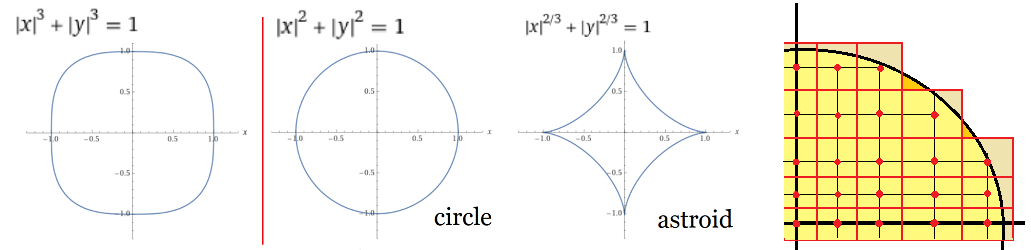}
    \label{fig}
    \caption{Examples of the $p$-circle and the approximation by unit squares.}
  \end{figure}
  \begin{align*}
    P_{2}(r)=\frac{1}{\pi}r^{\frac{1}{2}}&\sum_{1\leq n<M}\frac{R(n)}{n^{\frac{3}{4}}}\cos(2\pi\sqrt{n}r-\frac{3}{4}\pi)\ +\ (\text{remainder terms})\\
    & \text{for }r>1,\ r^{2}\not\in\mathbb{N},\text{ and }r^{2}\leq M \leq r^{2A},\text{  with some fixed constant }A>1. 
  \end{align*}\par
  Since then, the $\mathcal{O}$-estimates of the error term $P_{2}$ has been improved, and the latest results show that for any $\varepsilon$ approximately greater than $\frac{27}{208}(=0.1298\cdots)$, the claim (\ref{Hardy}) holds by M.N. Huxley \cite{Huxley-2003} in 2003, but the conjecture itself remains an open.\par\vspace{5pt}
  
  On the other hand, in the cases $p>2$, by considering the second main term (\cite{Kratzel}, (3.55)) 
  \begin{equation}\label{smt}
    \Psi(r;p):=8\sqrt{\pi}\Gamma(1+\frac{1}{p})\sum_{n=1}^{\infty}\left(\frac{r}{\pi n}\right)J_{\frac{2}{p}}^{(p)}(2\pi nr)\qquad\text{for }r>0
  \end{equation}
  expressed by the series expansion using the generalized Bessel functions(\cite{Kratzel}, Definition 3.3) with $p\geq1$ by E. Kr\"{a}tzel
  \begin{equation}\label{Kratzel-J}
    J_{\nu}^{(p)}(r):=\frac{2}{\sqrt{\pi}\Gamma(\nu+1-\frac{1}{p})}\left(\frac{r}{2}\right)^{\frac{p\nu}{2}}\int_{0}^{1}(1-t^{p})^{\nu-\frac{1}{p}}\cos rt\ dt\qquad\text{for }r>0,\ \nu>\frac{1}{p}-1,
  \end{equation}  
  the following representation (\cite{Kratzel}, (3.57)) with $\Delta(r;p)(=o(r^{\frac{2}{3}}))$ as the remainder term is obtained.
  \begin{equation}
    P_{p}(r)=\Psi(r;p)+\Delta(r;p).
  \end{equation}
  Note that $f(t)=o(g(t))$ for the functions $f$ and $g$ means $\lim_{t\to\infty}|\frac{f(t)}{g(t)}|=0$. In these cases, by combining the estimates $\Psi(r;p)=\mathcal{O}(r^{1-\frac{1}{p}}),\ \Omega(r^{1-\frac{1}{p}})$ which is known from the asymptotic expansion of the second main term $\Psi(r;p)$, the following is obtained.
  \begin{theorem}[\itshape{\cite{Kratzel}, Theorem 3.17 A}]\label{upper}
    \itshape{Let $p>2$. If $\alpha_{p}<1-\frac{1}{p}$ such that $\Delta(r;p)=\mathcal{O}(r^{\alpha_{p}})$ exists, then $P_{p}(r)=\mathcal{O}(r^{1-\frac{1}{p}}),\Omega(r ^{1-\frac{1}{p}})$ holds.}
  \end{theorem}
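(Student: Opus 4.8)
The plan is to obtain both the $\mathcal{O}$- and the $\Omega$-estimate for $P_{p}$ directly from the decomposition $P_{p}(r)=\Psi(r;p)+\Delta(r;p)$ by a triangle-inequality argument, exploiting that under the hypothesis the remainder $\Delta$ is of strictly smaller order than the second main term $\Psi$. The only inputs I would invoke are the growth of the second main term already recorded in the excerpt, namely $\Psi(r;p)=\mathcal{O}(r^{1-\frac{1}{p}})$ and $\Psi(r;p)=\Omega(r^{1-\frac{1}{p}})$, the hypothesis $\Delta(r;p)=\mathcal{O}(r^{\alpha_{p}})$ with $\alpha_{p}<1-\frac{1}{p}$, and the definitions of $\mathcal{O}$ and $\Omega$ as conditions on $\limsup$ and $\liminf$ of $|f/g|$.

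For the upper bound I would divide the identity by $r^{1-\frac{1}{p}}$ and estimate
\begin{equation*}
  \frac{|P_{p}(r)|}{r^{1-\frac{1}{p}}}\ \leq\ \frac{|\Psi(r;p)|}{r^{1-\frac{1}{p}}}+\frac{|\Delta(r;p)|}{r^{1-\frac{1}{p}}}.
\end{equation*}
The first summand stays bounded as $r\to\infty$ by the $\mathcal{O}$-estimate for $\Psi$, while the second is $\mathcal{O}(r^{\alpha_{p}-(1-\frac{1}{p})})$ and hence tends to $0$, the exponent $\alpha_{p}-(1-\frac{1}{p})$ being negative. Taking $\limsup_{r\to\infty}$ leaves a finite value, which is precisely $P_{p}(r)=\mathcal{O}(r^{1-\frac{1}{p}})$.

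For the lower bound I would use the reverse triangle inequality in the form
\begin{equation*}
  \frac{|P_{p}(r)|}{r^{1-\frac{1}{p}}}\ \geq\ \frac{|\Psi(r;p)|}{r^{1-\frac{1}{p}}}-\frac{|\Delta(r;p)|}{r^{1-\frac{1}{p}}},
\end{equation*}
pass to $\liminf_{r\to\infty}$, and apply $\liminf(a-b)\geq\liminf a-\limsup b$. Since the $\Omega$-estimate for $\Psi$ gives $\liminf_{r\to\infty}|\Psi(r;p)|/r^{1-\frac{1}{p}}>0$ while the subtracted term again tends to $0$, the $\liminf$ of $|P_{p}(r)|/r^{1-\frac{1}{p}}$ remains strictly positive, which is the assertion $P_{p}(r)=\Omega(r^{1-\frac{1}{p}})$.

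I do not expect any genuine analytic obstacle at this stage, as the entire argument is a soft comparison of orders; the whole content of the theorem is carried by the two facts it invokes. The genuinely hard work sits upstream and explains why the statement is conditional: establishing the sharp order of $\Psi$ in both directions (in particular the $\Omega$-direction, where one must rule out that the oscillating generalized Bessel series decays faster than $r^{1-\frac{1}{p}}$), and, above all, verifying the hypothesis that an exponent $\alpha_{p}$ strictly below $1-\frac{1}{p}$ is attainable for $\Delta$. Once those ingredients are in place, only the bookkeeping of exponents remains, and the condition $\alpha_{p}<1-\frac{1}{p}$ is exactly what renders the remainder negligible against the main term in both the $\mathcal{O}$- and the $\Omega$-estimate.
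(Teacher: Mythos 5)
Your proposal is correct and matches the argument the paper indicates: the paper does not prove this statement itself (it is cited from Kr\"{a}tzel, Theorem 3.17~A), but its surrounding text says precisely that the result follows by combining the decomposition $P_{p}(r)=\Psi(r;p)+\Delta(r;p)$ with the estimates $\Psi(r;p)=\mathcal{O}(r^{1-\frac{1}{p}}),\ \Omega(r^{1-\frac{1}{p}})$, which is exactly your triangle-inequality and reverse-triangle-inequality comparison. Your identification of the genuinely hard inputs (the sharp two-sided order of $\Psi$ from its asymptotic expansion) as lying upstream of this soft bookkeeping is also accurate.
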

  Furthermore, applying the latest result $\Delta(r;p)=\mathcal{O}(r^{\frac{46}{73}}(\log r)^{\frac{315}{146}})$ by G. Kuba\cite{Kuba} in 1993, it is clear that the lattice point problems of the $p$-circle for $p$ at least greater than $\frac{73}{27}$ have been solved. \par\vspace{5pt}
  Thus, previous research on the lattice point problems of the $p$-circle in the sense of matching $\mathcal{O}$ and $\Omega$-estimates, has been only considered figures where $p$ is greater than or equal to $2$. On the other hand, in another sense, the lattice point problems have been widely studied, including the cases $0<p<2$. For example, for a family of $s$ of lattice points in the $p$-ellipse $\{x\in\mathbb{R}^{2}|\ |sx_{1}|^{p}+|\frac{x_{2}}{s}|^{p}=r^{p}\}\ (s>0)$, the general form of the $p$-circle, there are problems to find $s$ such that the number of lattice points is the largest. On those problems, it was shown by R.S. Laugesen and S. Ariturk\cite{Laugesen1} in 2017, and by Laugesen and S. Liu\cite{Laugesen2} in 2018, that the closer to the $p$-circle (that is, $s\to1$) in the cases $0<p<1$ or $p>1$, the more lattice points there are (the case $p=1$ is still open). \par\vspace{2pt}
  Therefore, in addition to the cases $p\geq2$, which has been the subject of previous studies, we can also find a meaning for the study of the lattice point problems of the $p$-circle (in the original sense) in the cases $0<p<2$. As mentioned above, however, the attempt to lower $p$ for the solved problem has been stagnant for more than 30 years. Let us recall that the Kr\"{a}tzel's method by using the second main term (that is, the approach based on Theorem \ref{upper}) is conventional in the cases $p\geq2$. This method deals with the second-order derivative $x''_{2}(x_{1})$ for $x_{2}>0$ with $r$ fixed. In the cases $0<p<2$, however, singularities appear in this derivative, hence it would be difficult to approach the problem along this method. \par
  Thus, we need another method analyzing a function such that no singularity appears, and attempt to conduct a study based on series expansion using new generalized Bessel functions, especially for the unsolved lattice point problems of $p$-circle for $0<p<2$. \par\vspace{5pt}
  The inspiration for this attempt is based on the harmonic analytic approach to Hardy's conjecture, that is, the problem of $p=2$, which was presented in 2022 by S. Kuratsubo and E. Nakai\cite{Kuratsubo-2022}. In their method, the following series representation using the Bessel functions for the functions $D_{\beta}(s:x)-\mathcal{D}_{\beta}(s:x)$, which generalize the error term $P_{2}$ by variables, played an important role (\cite{Kuratsubo-2022}, (2.6)).\vspace{-10pt}
  \begin{equation}\label{D-J}
      D_{\beta}(s:x)-\mathcal{D}_{\beta}(s:x)=s^{\beta+1}2^{\beta+1}\pi\sum_{ n\in\mathbb{Z}^{2}\setminus\{0\}}\frac{J_{\beta+1}(2\pi\sqrt{s}|x-n|)}{(2\pi\sqrt{s}|x-n|)^{\beta+1}}\qquad\text{ if }\beta>\frac{1}{2}.
  \end{equation}
  Note that, for $\beta>-1,\ s>0,\ x\in\mathbb{R}^{2}$, they are defined as
    \begin{equation}\label{D}
    D_{\beta}(s:x):=\frac{1}{\Gamma(\beta+1)}\sum_{|m|^{2}<s}(s-|m|^{2})^{\beta}
    e^{2\pi ix\cdot m},\quad 
    \mathcal{D}_{\beta}(s:x):=\frac{1}{\Gamma(\beta+1)}\int_{|\xi|^{2}<s}
    (s-|\xi|^{2})^{\beta}e^{2\pi ix\cdot \xi}d\xi,
    \end{equation}
    in particular $D_{0}(r^{2}:0)-\mathcal{D}_{0}(r^{2}:0)=P_{2}(r)$. Then, by this series representation (\ref{D-J}), they obtained the following $\mathcal{O}$-estimates.

   \begin{proposition}[\itshape{\cite{Kuratsubo-2022}; Lemma 5.1(Special cases)}]\label{prop2022}
    \itshape{Under the above definitions of the functions,}\vspace{-2pt}
    \begin{equation*}\vspace{-2pt}
      D_{0}(s:x)-\mathcal{D}_{0}(s:x)=\mathcal{O}(s^{\frac{1}{3}})
    \end{equation*}
    \itshape{uniformly with respected to $x\in(-\frac{1}{2},\frac{1}{2}]^{2}$. In particular, 
    $P_{2}(r)=\mathcal{O}(r^{\frac{2}{3}})$ holds.}
    \end{proposition}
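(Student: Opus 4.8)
The plan is to take the series (\ref{D-J}) as the starting point at $\beta=0$, where its right-hand side is only conditionally convergent (which is exactly why the clean hypothesis $\beta>\tfrac12$ is imposed), and to extract the exponent $\tfrac13$ from the oscillation of the Bessel functions. Writing $\delta_0(s:x):=D_0(s:x)-\mathcal D_0(s:x)$, the formal $\beta=0$ instance of (\ref{D-J}) reads
\begin{equation*}
  \delta_0(s:x)=\sqrt{s}\sum_{n\in\mathbb Z^2\setminus\{0\}}\frac{J_1(2\pi\sqrt{s}\,|x-n|)}{|x-n|}.
\end{equation*}
Since $x\in(-\tfrac12,\tfrac12]^2$ and $n\neq0$, an elementary check gives $|x-n|\ge\tfrac12$, so there is no small-denominator issue and every estimate below can be made uniform in $x$. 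First I would justify this identity at $\beta=0$ by a limiting (Abel/regularization) argument descending from the range $\beta>\tfrac12$, where the series (\ref{D-J}) converges absolutely.

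\emph{Reduction to a single oscillatory sum.} Inserting the Bessel asymptotics $J_1(t)=\sqrt{2/(\pi t)}\cos(t-\tfrac{3\pi}{4})+O(t^{-3/2})$, the $O(t^{-3/2})$ remainder contributes
\begin{equation*}
  \sqrt{s}\sum_{n\neq0}\frac{O\big((\sqrt{s}\,|x-n|)^{-3/2}\big)}{|x-n|}
  =O\!\left(s^{-1/4}\sum_{n\neq0}|x-n|^{-5/2}\right)=O(s^{-1/4}),
\end{equation*}
which is negligible because $\tfrac52>2$ makes the lattice sum converge absolutely. This leaves the principal oscillatory part
\begin{equation*}
  \delta_0(s:x)=c\,s^{1/4}\,T(s:x)+O(s^{-1/4}),\qquad
  T(s:x):=\sum_{n\neq0}\frac{\cos\!\big(2\pi\sqrt{s}\,|x-n|-\tfrac{3\pi}{4}\big)}{|x-n|^{3/2}},
\end{equation*}
so the whole problem becomes the uniform bound $T(s:x)=O(s^{1/12})$, since $s^{1/4}\cdot s^{1/12}=s^{1/3}$.

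\emph{Truncation and cancellation.} I would split $T$ at a scale $M$ to be chosen. For the head $\tfrac12\le|x-n|\le M$ the trivial bound gives, in two dimensions, $\sum_{|x-n|\le M}|x-n|^{-3/2}=O(M^{1/2})$. For the tail $|x-n|>M$, summation by parts in $|x-n|$ reduces matters to the exponential sums $\sum_{|x-n|\sim P}e^{2\pi i\sqrt{s}\,|x-n|}$ over dyadic annuli $P\ge M$; here the phase $2\pi\sqrt{s}\,|x-n|$ has nonvanishing curvature, so one application of van der Corput's second-derivative test (equivalently, Poisson summation, the $B$-process) produces a genuine gain of a power of $s$ over the trivial count $P^2$. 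Summing the dyadic contributions and balancing them against the head at the critical truncation $M\asymp s^{1/6}$ yields $T(s:x)=O(s^{1/12})$ uniformly in $x$, hence $\delta_0(s:x)=O(s^{1/3})$. Taking $x=0$, $s=r^2$, and recalling $D_0(r^2:0)-\mathcal D_0(r^2:0)=P_2(r)$ gives $P_2(r)=O(r^{2/3})$.

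The main obstacle is the tail estimate: establishing enough cancellation in $\sum_{|x-n|\sim P}e^{2\pi i\sqrt{s}\,|x-n|}$ uniformly in $x$. Because the gradient of the phase has constant length $2\pi\sqrt{s}$, its Poisson dual is again a lattice-point count near a circle of radius $\asymp\sqrt{s}$, so the estimate is self-referential; the exponent $\tfrac13$ is precisely the fixed point of this self-similar structure reached after a single application of the $B$-process, which is why it is the natural first bound and why pushing below $\tfrac13$ toward the conjectural $\tfrac14$ is hard. A secondary, more technical point is the rigorous justification of the conditionally convergent $\beta=0$ series and the interchange of summation with the Bessel asymptotic expansion; I would handle this by carrying out all estimates first for $\beta>\tfrac12$ and then passing to the limit, or by working throughout with a smooth dyadic decomposition of the sum over $n$ so that every rearrangement is over a finite range.
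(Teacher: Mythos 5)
First, note that the paper itself does not prove this proposition: it is quoted from \cite{Kuratsubo-2022} (Lemma 5.1), so your attempt must be judged against that method, which the paper describes as resting on the representation (\ref{D-J}) together with properties of the Bessel functions. Your reduction is sound up to the tail: the $\beta=0$ form of (\ref{D-J}), the absolutely convergent $O(s^{-1/4})$ disposal of the Bessel remainder, and the head bound $s^{1/4}\sum_{\frac12\le|x-n|\le M}|x-n|^{-3/2}=O(s^{1/4}M^{1/2})$ are all correct. The genuine gap is the tail estimate, and it is not a technicality but the heart of the matter. If you apply Poisson summation (your $B$-process) to the smoothly truncated tail $\sum_{|x-n|>M}J_{1}(2\pi\sqrt{s}|x-n|)/|x-n|$, the dual quantity is
\begin{equation*}
  \sum_{k\in\mathbb{Z}^{2}}\bigl(\chi_{B(0,\sqrt{s})}-\chi_{B(0,\sqrt{s})}\ast\phi_{1/M}\bigr)(k)\,e^{2\pi ik\cdot x},
\end{equation*}
which in absolute value is essentially the count $\#\{k\in\mathbb{Z}^{2}:\ \bigl||k|-\sqrt{s}\bigr|\lesssim 1/M\}$. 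Without further input the only available bound on this annulus count is $O(\sqrt{s})$ (one point per lattice row), so the tail contributes $O(\sqrt{s})$ to $D_{0}-\mathcal{D}_{0}$ and you recover nothing better than the trivial estimate, no matter how $M$ is chosen. Your own observation that the structure is ``self-referential'' with ``fixed point $\tfrac13$'' is not a substitute for a proof: writing $E(s)$ for the quantity to be bounded, the inequality your scheme produces is $E(s)\lesssim s^{1/3}+E(s)$ --- the annulus error enters with coefficient one --- and iterating it gains nothing.

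There are two standard ways to break this circularity, neither of which appears in your proposal. (i) Arithmetic input: the divisor-type bound $r_{2}(m)\ll_{\varepsilon}m^{\varepsilon}$ gives the annulus count $O_{\varepsilon}((\sqrt{s}/M+1)s^{\varepsilon})$, which with $M\asymp s^{1/6}$ salvages your scheme, but only up to $O(s^{1/3+\varepsilon})$, weaker than the stated $O(s^{1/3})$. (ii) Positivity and averaging, which is the actual mechanism behind \cite{Kuratsubo-2022}: average in $s$ over a window of length $h$, so that each term of the $\beta=0$ series acquires the oscillation gain $\min(1,\sqrt{s}/(|n|h))$ and the averaged series converges absolutely with total $O(s^{1/2}h^{-1/2})$; the price of averaging is again an annulus count, but at $x=0$ it is avoided entirely because $D_{0}(\cdot:0)$ is nondecreasing (sandwich $D_{0}(s:0)$ between its averages over $[s-h,s]$ and $[s,s+h]$), and the annulus count for general $x$ is then dominated by the already-bounded $x=0$ case; optimizing $h\asymp s^{1/3}$ gives exactly $O(s^{1/3})$ uniformly in $x$. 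Note that in this argument no cancellation in the dual lattice sum is ever exploited: the gain comes from smoothing plus positivity, which is precisely the ingredient missing from your outline. A secondary but real issue is your unproved appeal to Abel regularization to justify (\ref{D-J}) at $\beta=0$; the pointwise behaviour of that conditionally convergent series is itself one of the delicate points of \cite{Kuratsubo-2022}, and the averaging route sidesteps it as well.
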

  This $\mathcal{O}$-estimate of $P_{2}$ is now a relatively crude evaluation by G.F. Vorono\"{i}\cite{Voronoi} in 1904 and W. Sierpi\'{n}ski\cite{Sierpinski} in 1906. For the cases $0<p<2$, however, only $\mathcal{O}$-estimates
  \begin{align}
    P_{p}(r)&=\mathcal{O}(r)\hspace{26pt}\text{if }p>0,\label{conv1}\\
    P_{p}(r)&=\mathcal{O}(r^{\frac{2}{3}})\qquad\text{if }0<p\leq\frac{1}{2}\label{conv2}
  \end{align}   
  have been obtained by simple calculations from the theorems for general convex closed curves by Kr\"{a}tzel (\cite{Kratzel}, Theorem 3.1 and Theorem 3.6). Therefore, if Proposition \ref{prop2022} (the result for $p=2$ which is less than $\frac{73}{27}$ via the Kr\"{a}tzel's method (\ref{smt})) can be generalized for smaller $p$ by following Kuratsubo and Nakai's method of using the series representation (\ref{D-J}), we can expect to improve the estimates (\ref{conv1}) or (\ref{conv2}), depending on the obtained values. In particular, since the latter estimates does not include that of figures such as the astroid (see Figure \ref{fig}), it is of great significance to try to realize this generalization for $p$. \par\vspace{5pt}
  Thus, in this paper, we need to consider the $p$-circle lattice points error functions generalized by variables $D_{\beta}^{[p]}-\mathcal{D}_{\beta}^{[p]}$, which consist of generalizations of Kuratsubo and Nakai's functions (\ref{D}) for $p$
  \begin{equation}\label{Dp}
    D_{\beta}^{[p]}(s:x):=\frac{1}{\Gamma(\beta+1)}\sum_{|m|_{p}^{p}<s}(s-|m|_{p}^{p})^{\beta}e^{2\pi ix\cdot m},\quad 
    \mathcal{D}_{\beta}^{[p]}(s:x):=\frac{1}{\Gamma(\beta+1)}\int_{|\xi|_{p}^{p}<s}
    (s-|\xi|_{p}^{p})^{\beta}e^{2\pi ix\cdot \xi}d\xi,
  \end{equation}
  where $|\cdot|_{p}$ is the $p$-norm, that is, $|\xi|_{p}=(|\xi_{1}|^{p}+|\xi_{2}|^{p})^{\frac{1}{p}}$ for $\xi=(\xi_{1},\xi_{2})\in\mathbb{R}^{2}$. \par
  
  To conclude, if we define a generalization of the Bessel function of order zero  for $p>0$ (note that this is different from function (\ref{Kratzel-J}) by Kr\"{a}tzel and the detail is given in the next section)
  \begin{equation}\label{Bessel_0}
    J_{0}^{[p]}(\eta):=\frac{1}{\Gamma^{2}(\frac{1}{p})}\left(\frac{2}{p}
    \right)^{2}\int_{0}^{1}\cos(\eta_{1}t^{\frac{1}{p}})\cos(\eta_{2}(1-t)^{\frac{1}{p}})
    t^{\frac{1}{p}-1}(1-t)^{\frac{1}{p}-1}dt\qquad\text{for }\eta\in\mathbb{R}^{2},\vspace{-5pt}
  \end{equation}
  and add order $\omega>0$ to this function as
  \begin{equation}\label{Bessel_omega}
    J_{\omega}^{[p]}(\eta):=\frac{|\eta|_{p}^{\omega}}{p^{\omega-1}\Gamma(\omega)}
    \int_{0}^{1}J_{0}^{[p]}(\tau\eta)\tau(1-\tau^{p})^{\omega-1}
    d\tau\qquad\text{for }\eta\in\mathbb{R}^{2},\ \omega>0,\vspace{-5pt}
  \end{equation}
  then the following theorem holds. This theorem is the main result of the present paper, which gives a generalization of Kuratsubo and Nakai's series expansion (\ref{D-J}). Furthermore, it may be regarded as the initial step of our on-going research project (see Section 4).
  \begin{theorem}\label{thm}
  \itshape{Let $p>0$. If $\beta>-1$ satisfies that $\mathcal{D}_{\beta}^{[p]}(1:x)$ is integrable on $\mathbb{R}^{2}$, then}
  \begin{equation*}
    D_{\beta}^{[p]}(s:x)-\mathcal{D}_{\beta}^{[p]}(s:x)=s^{\beta+\frac{2}{p}}p^{\beta+1}
    \Gamma^{2}(\frac{1}{p})\sum_{n\in\mathbb{Z}^{2}\setminus\{0\}}\frac{J_{\beta+1}^{[p]}
    (2\pi\sqrt[p]{s}(x-n))}{(2\pi \sqrt[p]{s}|x-n|_{p})^{\beta+1}}
    \qquad\text{for }s>0,\ x\in\mathbb{R}^{2}.
  \end{equation*}
  Furthermore, under this assumption, the series converges absolutely for $x\in\mathbb{T}^{2}(:=(-\frac{1}{2},\frac{1}{2}]^{2})$.
  \end{theorem}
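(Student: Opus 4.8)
The plan is to recognize the left-hand side as a twisted lattice sum and to apply the Poisson summation formula, after which the identity reduces to a single Fourier-transform computation. Concretely, set $f_{s}(\xi):=\frac{1}{\Gamma(\beta+1)}(s-|\xi|_{p}^{p})_{+}^{\beta}$, which is continuous and compactly supported, so that $D_{\beta}^{[p]}(s:x)=\sum_{m\in\mathbb{Z}^{2}}f_{s}(m)e^{2\pi ix\cdot m}$ (a finite sum), while $\mathcal{D}_{\beta}^{[p]}(s:x)=\widehat{f_{s}}(-x)$ with $\widehat{f_{s}}(\eta)=\int_{\mathbb{R}^{2}}f_{s}(\xi)e^{-2\pi i\eta\cdot\xi}d\xi$. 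Since $f_{s}$ is even, $\widehat{f_{s}}$ is even, and Poisson summation applied to $\xi\mapsto f_{s}(\xi)e^{2\pi ix\cdot\xi}$ formally yields $D_{\beta}^{[p]}(s:x)=\sum_{n\in\mathbb{Z}^{2}}\widehat{f_{s}}(x-n)$. Isolating the term $n=0$, which equals $\mathcal{D}_{\beta}^{[p]}(s:x)$, leaves $D_{\beta}^{[p]}(s:x)-\mathcal{D}_{\beta}^{[p]}(s:x)=\sum_{n\neq 0}\widehat{f_{s}}(x-n)$, so everything rests on evaluating $\widehat{f_{s}}$ explicitly.

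For the Fourier transform I would first scale, writing $\widehat{f_{s}}(\eta)=s^{\beta+\frac{2}{p}}\widehat{f_{1}}(s^{1/p}\eta)$, so that it suffices to treat $s=1$ and set $\nu:=2\pi s^{1/p}\eta$. Using evenness to replace $e^{-2\pi i\nu\cdot\zeta}$ by $4\cos(\nu_{1}\zeta_{1})\cos(\nu_{2}\zeta_{2})$ on the positive quadrant, I would then apply the successive substitutions $(\zeta_{1},\zeta_{2})\mapsto(a,b)=(\zeta_{1}^{p},\zeta_{2}^{p})$ and $(a,b)\mapsto(R,t)$ with $a=Rt,\ b=R(1-t)$. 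The inner $t$-integral reproduces exactly the integrand defining $J_{0}^{[p]}(R^{1/p}\nu)$, and after the further change $R=\tau^{p}$ the remaining $R$-integral becomes the defining integral of $J_{\beta+1}^{[p]}(\nu)$. Collecting the constants $\frac{2}{p}$, $\Gamma(\tfrac1p)$, $\Gamma(\beta+1)$, and $p^{\beta}$ produced along the way gives $\widehat{f_{s}}(\eta)=s^{\beta+\frac{2}{p}}p^{\beta+1}\Gamma^{2}(\tfrac1p)\frac{J_{\beta+1}^{[p]}(2\pi s^{1/p}\eta)}{(2\pi s^{1/p}|\eta|_{p})^{\beta+1}}$, which is precisely the summand in the theorem.

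The genuine obstacle is not the formal manipulation but the rigorous justification of the summation identity and of the absolute convergence, and this is exactly where the hypothesis enters. Since $f_{s}$ is merely compactly supported and continuous, $\widehat{f_{s}}$ need not decay rapidly a priori, so I would avoid invoking a smooth Poisson theorem and instead argue by uniqueness of Fourier coefficients on $\mathbb{T}^{2}$. The hypothesis that $\mathcal{D}_{\beta}^{[p]}(1:\cdot)=\widehat{f_{1}}$ is integrable over $\mathbb{R}^{2}$ is equivalent, after scaling, to $\widehat{f_{s}}\in L^{1}(\mathbb{R}^{2})$; hence the periodization $H(x):=\sum_{n}\widehat{f_{s}}(x-n)$ is a well-defined element of $L^{1}(\mathbb{T}^{2})$ whose $m$-th Fourier coefficient is $\widehat{\widehat{f_{s}}}(m)=f_{s}(m)$. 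As $D_{\beta}^{[p]}(s:\cdot)$ is a trigonometric polynomial with the same coefficients $f_{s}(m)$, the two agree almost everywhere. To upgrade this to equality for every $x$ and to obtain the stated absolute convergence on $\mathbb{T}^{2}$, I would combine the explicit form of $\widehat{f_{s}}$ just computed with the decay of $J_{\beta+1}^{[p]}$ developed in the next section: the resulting pointwise bound on $|\widehat{f_{s}}(x-n)|$ makes $H$ continuous, which both forces $H=D_{\beta}^{[p]}(s:\cdot)-\mathcal{D}_{\beta}^{[p]}(s:\cdot)$ everywhere and yields the final claim. Balancing the $L^{1}$-hypothesis against the pointwise decay needed for absolute convergence is the delicate point of the argument.
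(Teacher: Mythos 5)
Your overall architecture coincides with the paper's own proof. The explicit evaluation of $\widehat{f_{s}}$ --- reduction of the quadrant integral to the integral defining $J_{0}^{[p]}$, followed by the radial integration against $\tau(1-\tau^{p})^{\beta}$, which is exactly the order-raising integral of Definition \ref{I-Om} --- reproduces Proposition \ref{mathD_J} (your substitutions $a=\zeta_{1}^{p},\ b=\zeta_{2}^{p}$ and $a=Rt,\ b=R(1-t)$ are an equivalent parametrization of the generalized polar coordinates behind (\ref{prad-Fourier})). Likewise, identifying the periodization of $\widehat{f_{s}}$ with the trigonometric polynomial $D_{\beta}^{[p]}(s:\cdot)$ through the $L^{1}$ Poisson summation formula (Lemma \ref{Psf}, i.e.\ the Stein--Weiss Fourier-coefficient identity) and then isolating the $n=0$ term is precisely the paper's argument; your $\widehat{f_{s}}(x-n)$ is the paper's $\Gamma(\beta+1)^{-1}\Psi_{\sqrt[p]{s}}^{[p]}(x-n)=\mathcal{D}_{\beta}^{[p]}(s:x-n)$. (One small slip: for $-1<\beta<0$ the function $f_{s}$ is unbounded near $|\xi|_{p}^{p}=s$, so it is not continuous; but it is still integrable, which is all your argument actually needs.)

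The genuine gap is in your last step. You propose to obtain both the absolute convergence on $\mathbb{T}^{2}$ and the upgrade from almost-everywhere to everywhere equality from ``the decay of $J_{\beta+1}^{[p]}$ developed in the next section.'' No such estimates exist to be invoked: for general $p>0$ and positive order $\omega=\beta+1$, asymptotic or decay estimates of $J_{\omega}^{[p]}$ are exactly what the paper's concluding section identifies as the \emph{next, still open} step of the project (the known results, in \cite{K2}, concern order zero only, and restricted ranges of $p$; the case $p=2$ via (\ref{J_closed}) is the lone exception). So as written your proof is conditional on an unproved and genuinely hard input, and it would also silently strengthen the hypothesis: Theorem \ref{thm} assumes only $\mathcal{D}_{\beta}^{[p]}(1:\cdot)\in L^{1}(\mathbb{R}^{2})$, not any pointwise decay. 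The paper closes this step using the hypothesis alone: for $x\in\mathbb{T}^{2}$ and $n\neq0$ one has $|x-n|_{p}\geq\frac{1}{2}$, and the tail sum $\sum_{n\neq0}\lvert\mathcal{D}_{\beta}^{[p]}(s:x-n)\rvert$ is compared with the integral of $\lvert\mathcal{D}_{\beta}^{[p]}(s:x-y)\rvert$ over $\{y:|x-y|_{p}\geq\frac{1}{2}\}$, which after rescaling is a constant times $\int_{|y'|_{p}\geq\sqrt[p]{s}/2}\lvert\mathcal{D}_{\beta}^{[p]}(1:y')\rvert\,dy'<+\infty$. (Whether that sum-to-integral comparison itself deserves further justification is a fair question --- your closing remark that balancing the $L^{1}$ hypothesis against pointwise information is the delicate point is well taken --- but it is the paper's argument, and it draws on nothing beyond the stated assumption.) To make your proof self-contained, replace the appeal to decay of $J_{\beta+1}^{[p]}$ by an argument of this kind; your uniqueness-of-Fourier-coefficients reasoning then closes the identity just as in the paper.
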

  For this generalized Bessel function of order zero $J_{0}^{[p]}$, D.St.P. Richards\cite{Richards-1985}\cite{Richards-1986} and W. zu Castell\cite{Castell} have already made research based on different viewpoints, but for the purpose of generalizing the series representation (\ref{D-J}), which is our problem, we add orders $\omega$ greater than 0 as (\ref{Bessel_omega}). \par
  Furthermore, for the function of order zero $J_{0}^{[p]}$ in the cases $0<p\leq1$ or $p=2$, the author has already obtained asymptotic evaluations (uniform asymptotic estimates on compact sets on quadrants of $\mathbb{R}^{2}$ for $0<p\leq1$ or $p=2$, and in particular uniform asymptotic estimates on $\mathbb{R}^{2}$ for the cases such that $\frac{2}{p}\in\mathbb{N}$), in the paper\cite{K2}. Thus, especially for the unsolved lattice point problems of $0<p<2$, it is difficult to use the method with the second main term by Kr\"{a}tzel as mentioned above, 
  but we expect that our trial via the newly defined generalized Bessel functions $J_{\omega}^{[p]}$ plays an important role. \par\vspace{7pt}
  Therefore, this paper is organized as follows: in Section 2, the logical background of the generalized Bessel functions $J_{\omega}^{[p]}$ is described and definitions (\ref{Bessel_0}) and (\ref{Bessel_omega}) are given, and the proof of Theorem \ref{thm}, the main result of this paper, is given in Section 3. Finally, in Section 4, we conclude this paper by discussing future attempts to realize the goal of generalization of Proposition \ref{prop2022}.\vspace{7pt}
  
  \section{Definitions and properties of generalized Bessel functions}
  \hspace{13pt}As a starting point, recall that a function $F$ on $\mathbb{R}^{2}$ is spherically symmetric if there exists a function $\phi$ on nonnegative real numbers such that $F(x)=\phi(|x|)$ holds for any $x\in\mathbb{R}^{2}$, and that if such a function $F$ is integrable on $\mathbb{R}^{2}$, its Fourier transform can be displayed as
  \begin{equation}\label{2rad-Fourier}
    \hat{F}(\xi)=2\pi\int_{0}^{\infty}J_{0}(2\pi r|\xi|)\phi(r)rdr
    \quad\text{for }\xi\in\mathbb{R}^{2}
  \end{equation}
  by integration with the Bessel function $J_{0}$ (Hankel transform of order zero; \cite{Gaskill} or \cite{Stein-Shakarchi}). \par
  Now, based on this result, we would like to generalize the Bessel function of order zero for $p$ by defining $p$-radial, a property which generalizes spherical symmetry as follows, and then deriving the integral representation corresponding to this property.

  \begin{definition}[\itshape{$p$-radial}]\label{p-rad}
    \itshape{Let $p$ be a positive real number and $|x|_{p}:=(|x_{1}|^{p}+|x_{2}|^{p})^{\frac{1}{p}}$ be the $p$-norm. Then, for a function $F$ on $\mathbb{R}^{2}$, $F$ is said to be $p$-radial if there exists a function $\phi$ on nonnegative real numbers such that $F(x)=\phi(|x|_{p})$ holds for any $x\in\mathbb{R}^{2}$. }
  \end{definition}\vspace{7pt}
  First, in order to derive the desired integral representation, we will focus on how to derive the transformation (\ref{2rad-Fourier}) for a spherically symmetric and integrable function $F$ on $\mathbb{R}^{2}$. Using the well-known property that "if an integrable function $F$ is spherically symmetric, its Fourier transform is also spherically symmetric," this integral representation can be obtained from a simple transformation by appropriately selecting $\xi\in\mathbb{R}^{2}$.\par
  On the other hand, when spherical symmetry is replaced by a generalization to $p$-radial, no such property is found. Therefore, since the general form of the integral representation cannot be found by the simple method described above, it must be derived as follows. 
  \par\vspace{12pt}
  For the Fourier transform of an integrable and $p$-radial function $F$ on $\mathbb{R}^{2}$, the integral range is divided into four quadrants in the plane $\mathbb{R}^{2}$ and the variable transformation 
  \begin{equation*}
    x_{1}=\mathrm{sgn}(\cos\theta)\ r|\cos\theta|^{\frac{2}{p}},\quad
    x_{2}=\mathrm{sgn}(\sin\theta)\ r|\sin\theta|^{\frac{2}{p}},\quad
    \text{for }r\geq0,\ 0\leq\theta<2\pi
  \end{equation*}
  is applied (where $\mathrm{sgn}$ is the sign function).  Let the Jacobian in $j$-th quadrant be $\mathcal{J}_{[j]}(r,\theta)$, then the expression can be transformed as follows to obtain the desired integral representation and the function.
  \begin{align}\label{osc4}
    \hat{F}(\xi)&=\int_{0}^{\infty}\phi(r)\Bigl(\int_{0}^{\frac{\pi}{2}}e^{-2\pi ir
    (\xi_{1}\cos^{\frac{2}{p}}\theta+\xi_{2}\sin^{\frac{2}{p}}\theta)}\mathcal{J}_{[1]}
    (r,\theta)d\theta+\int_{\frac{\pi}{2}}^{\pi}e^{-2\pi ir(-\xi_{1}(-\cos
    \theta)^{\frac{2}{p}}+\xi_{2}\sin^{\frac{2}{p}}\theta)}\mathcal{J}_{[2]}(r,\theta)
    d\theta\notag\\
    &\hspace{10pt}+\int_{\pi}^{\frac{3}{2}\pi}e^{-2\pi ir(-\xi_{1}(-\cos\theta)^{\frac{2}
    {p}}-\xi_{2}(-\sin\theta)^{\frac{2}{p}})}\mathcal{J}_{[3]}(r,\theta)d\theta+
    \int_{\frac{3}{2}\pi}^{2\pi}e^{-2\pi ir(\xi_{1}\cos^{\frac{2}{p}}\theta-\xi_{2}(-\sin
    \theta)^{\frac{2}{p}})}\mathcal{J}_{[4]}(r,\theta)d\theta\Bigr) dr\notag\\
    &=\frac{2}{p}\int_{0}^{\infty}\phi(r)r\int_{0}^{\frac{\pi}{2}}\bigl(e^{-2\pi ir
    (\xi_{1}\cos^{\frac{2}{p}}\theta+\xi_{2}\sin^{\frac{2}{p}}\theta)}+e^{2\pi ir(\xi_{1}
    \sin^{\frac{2}{p}}\theta-\xi_{2}\cos^{\frac{2}{p}}\theta)}+e^{2\pi ir(\xi_{1}
    \cos^{\frac{2}{p}}\theta+\xi_{2}\sin^{\frac{2}{p}}\theta)}\notag\\
    &\hspace{83pt}+e^{-2\pi ir(\xi_{1}\sin^{\frac{2}{p}}\theta-\xi_{2}\cos^{\frac{2}{p}}
    \theta)}\bigr)(\cos\theta\sin\theta)^{\frac{2}{p}-1} d\theta dr\notag\\
    &=\frac{2}{p}\int_{0}^{\infty}\phi(r)r\int_{0}^{\frac{\pi}{2}}\bigl(\cos[2\pi r
    (\xi_{1}\cos^{\frac{2}{p}}\theta+\xi_{2}\sin^{\frac{2}{p}}\theta)]+\cos[2\pi 
    r(\xi_{1}\sin^{\frac{2}{p}}\theta-\xi_{2}\cos^{\frac{2}{p}}\theta)]\bigr)\notag\\
    &\hspace{250pt}\times(\cos^{2}\theta)^{\frac{1}{p}-1}(\sin^{2}\theta)^{\frac{1}{p}-1}
    2\sin\theta\cos\theta d\theta dr\notag\\
    &=\frac{2}{p}\int_{0}^{\infty}\Bigl(\int_{0}^{1}\bigl(\cos[2\pi r(\xi_{1}
    t^{\frac{1}{p}}+\xi_{2}(1-t)^{\frac{1}{p}})]+\cos[2\pi r(\xi_{1}t^{\frac{1}{p}}-
    \xi_{2}(1-t)^{\frac{1}{p}})]\bigr)(1-t)^{\frac{1}{p}-1}t^{\frac{1}{p}-1}dt\Bigr)\notag\\
    &\hspace{370pt}\times\phi(r)rdr\notag\\
    &=p\Gamma^{2}(\frac{1}{p})\int_{0}^{\infty}\Bigl(\frac{1}{\Gamma^{2}(\frac{1}{p})}
    \left(\frac{2}{p}\right)^{2}\int_{0}^{1}\cos(2\pi r\xi_{1}t^{\frac{1}{p}})
    \cos(2\pi r\xi_{2}(1-t)^{\frac{1}{p}})t^{\frac{1}{p}-1}(1-t)^{\frac{1}{p}-1}dt\Bigr)
    \phi(r)rdr,\notag
  \end{align}
  \begin{equation}\label{prad-Fourier}
    \hat{F}(\xi)=p\Gamma^{2}(\frac{1}{p})\int_{0}^{\infty}J_{0}^{[p]}(2\pi r\xi)\phi(r)
    rdr\qquad \text{for }\xi\in\mathbb{R}^{2},
  \end{equation}
  \vspace{-10pt}
  \begin{equation}\label{p-Bessel}
    \text{where }J_{0}^{[p]}(\eta):=\frac{1}{\Gamma^{2}(\frac{1}{p})}\left(\frac{2}{p}
    \right)^{2}\int_{0}^{1}\cos(\eta_{1}t^{\frac{1}{p}})\cos(\eta_{2}(1-t)^{\frac{1}{p}})
    t^{\frac{1}{p}-1}(1-t)^{\frac{1}{p}-1}dt\qquad\text{for }\eta\in\mathbb{R}^{2}.
  \end{equation}
  \begin{remark}
  The function $J_{0}^{[p]}$ is indeed a generalization of the Bessel function $J_{\alpha}$ at $\alpha=0$ (note that the definition domains are different). In fact, for $p=2$ and $\eta=(r\cos\tau,r\sin\tau)\ (r>0,\ 0\leq\tau<2\pi)$, it matchs the Poisson's integral representation of the Bessel function (\cite{Watson}, p47; (1)), as 
  \begin{align}
    J_{0}^{[2]}(\eta)&=\frac{1}{\pi}\int_{0}^{1}\cos(r\sqrt{t}\cos\tau)\cos(r
    \sqrt{1-t}\sin\tau)t^{-\frac{1}{2}}(1-t)^{-\frac{1}{2}}dt\notag\\
    &=-\frac{2}{\pi}\int_{\frac{\pi}{2}}^{0}\cos(r\cos\tau\cos\theta)\cos(r\sin\tau\sin
    \theta)d\theta\notag\\
    &=\frac{1}{\pi}\int_{0}^{\frac{\pi}{2}}(\cos(r(\cos\tau\cos\theta+\sin\tau
    \sin\theta))+\cos(r(\cos\tau\cos\theta-\sin\tau\sin\theta)))d\theta\notag\\
    &=\frac{1}{\pi}\int_{0}^{\frac{\pi}{2}}(\cos(r(\cos(\tau-\theta)))+\cos(r\cos(\tau
    +\theta)))d\theta\notag\\
    &=\frac{1}{\pi}\int_{-\frac{\pi}{2}+\tau}^{\frac{\pi}{2}+\tau}\cos(r\cos\theta)
    d\theta\notag\\
    &=\frac{1}{\pi}\int_{0}^{\pi}\cos(r\cos\theta)d\theta=J_{0}(|\eta|),\label{p=2}
  \end{align}
  and in particular $J_{0}^{[2]}$ has spherical symmetry. Thus, it is also clear that this integral representation (\ref{prad-Fourier}) is a generalization of the Hankel transform of order zero (\ref{2rad-Fourier}).
  \end{remark}
  Furthermore, for $n\in\mathbb{N}_{0}^{2},\ \eta\in\mathbb{R}^{2},\ \xi\in\mathbb{R}^{2}_{>0}$, the following multi-index notation is used to denote the series expansions of $J_{0}^{[p]}$. 
  \begin{equation*}
    |n|':=n_{1}+n_{2},\quad \eta^{n}:=\eta_{1}^{n_{1}}\cdot\eta_{2}^{n_{2}},\quad  
    n!:=(n_{1})!(n_{2})!,\quad\Gamma(\xi):=\Gamma(\xi_{1})\Gamma(\xi_{2}).
  \end{equation*}
  \begin{proposition}[\itshape{The series expansion of $J_{0}^{[p]}$}]\label{series}
    \itshape{}
    \begin{equation*}
      J_{0}^{[p]}(\eta)=\frac{1}{\Gamma^{2}(\frac{1}{p})}\left(\frac{2}{p}
      \right)^{2}\sum_{k=0}^{\infty}\frac{(-1)^{k}}{\Gamma(\frac{2(k+1)}{p})}
      \sum_{m\in\mathbb{N}_{0}^{2}\ |m|'=k}\frac{\Gamma(\frac{2m+1}{p})}{(2m)!}\eta^{2m}
      \qquad\text{for }\eta\in\mathbb{R}^{2}.
    \end{equation*}
  \end{proposition}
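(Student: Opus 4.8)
The plan is to expand each cosine in the defining integral \eqref{p-Bessel} as its Maclaurin series, multiply the two resulting power series, integrate term by term, and recognize each resulting integral as a Beta function. First I would write
\[
  \cos(\eta_{1}t^{\frac{1}{p}})=\sum_{a=0}^{\infty}\frac{(-1)^{a}}{(2a)!}\eta_{1}^{2a}t^{\frac{2a}{p}},
  \qquad
  \cos(\eta_{2}(1-t)^{\frac{1}{p}})=\sum_{b=0}^{\infty}\frac{(-1)^{b}}{(2b)!}\eta_{2}^{2b}(1-t)^{\frac{2b}{p}},
\]
both valid for every $t\in[0,1]$ since the cosine is entire. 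Multiplying these two series and inserting the weight $t^{\frac{1}{p}-1}(1-t)^{\frac{1}{p}-1}$, the integrand becomes a double power series indexed by $a,b\in\mathbb{N}_{0}$ whose $(a,b)$-term is $\frac{(-1)^{a+b}}{(2a)!(2b)!}\eta_{1}^{2a}\eta_{2}^{2b}\,t^{\frac{2a+1}{p}-1}(1-t)^{\frac{2b+1}{p}-1}$.

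Second, I would justify interchanging the double summation with the integral, which is the only step that is not purely formal. I would bound the sum of the absolute values of the terms by replacing each cosine by the corresponding hyperbolic cosine: on $[0,1]$ one has $t^{\frac{1}{p}}\leq1$ and $(1-t)^{\frac{1}{p}}\leq1$, so that $\cosh(|\eta_{1}|t^{\frac{1}{p}})\leq\cosh(|\eta_{1}|)$ and $\cosh(|\eta_{2}|(1-t)^{\frac{1}{p}})\leq\cosh(|\eta_{2}|)$. Hence the sum of the absolute values of the terms is dominated by $\cosh(|\eta_{1}|)\cosh(|\eta_{2}|)\,t^{\frac{1}{p}-1}(1-t)^{\frac{1}{p}-1}$, which is integrable on $[0,1]$ because $\frac{1}{p}>0$. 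By Tonelli's theorem the term-by-term integration is therefore legitimate.

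Third, I would evaluate the remaining integral as a Beta function,
\[
  \int_{0}^{1}t^{\frac{2a+1}{p}-1}(1-t)^{\frac{2b+1}{p}-1}\,dt
  =B\!\left(\tfrac{2a+1}{p},\tfrac{2b+1}{p}\right)
  =\frac{\Gamma(\frac{2a+1}{p})\Gamma(\frac{2b+1}{p})}{\Gamma(\frac{2a+2b+2}{p})},
\]
which is finite since all the arguments are positive. Finally, writing $m=(a,b)\in\mathbb{N}_{0}^{2}$ and grouping the double sum according to $k=|m|'=a+b$ (so that $\frac{2a+2b+2}{p}=\frac{2(k+1)}{p}$), and recalling the multi-index conventions $(2m)!=(2a)!(2b)!$, $\eta^{2m}=\eta_{1}^{2a}\eta_{2}^{2b}$ and $\Gamma(\frac{2m+1}{p})=\Gamma(\frac{2a+1}{p})\Gamma(\frac{2b+1}{p})$, the asserted formula follows at once. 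I expect no genuine obstacle beyond the justification of the interchange in the second step; the remainder is the Beta-integral computation and the bookkeeping of the multi-index notation.
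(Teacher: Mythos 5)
Your proposal is correct and takes essentially the same route as the paper's proof: expand both cosines in their Maclaurin series, integrate term by term, evaluate each resulting integral as a Beta function $B\bigl(\tfrac{2m_{1}+1}{p},\tfrac{2m_{2}+1}{p}\bigr)$, and regroup the double sum by $k=|m|'$ (the paper does this regrouping up front via a Cauchy product, you do it at the end, which is immaterial). The only substantive difference is that you explicitly justify the interchange of summation and integration by the $\cosh(|\eta_{1}|)\cosh(|\eta_{2}|)\,t^{\frac{1}{p}-1}(1-t)^{\frac{1}{p}-1}$ domination and Tonelli's theorem, a step the paper leaves implicit.
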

  \begin{proof}
    For $\eta\in\mathbb{R}^{2}$, the series representation is obtained as follows by the transformation of the expression via the Cauchy product and the term-by-term integration. Note that the equal sign on the last line holds from the beta function representation formula by the gamma function.
    \begin{align*}
      \int_{0}^{1}\cos(\eta_{1}&t^{\frac{1}{p}})\cos(\eta_{2}(1-t)^{\frac{1}{p}})
      t^{\frac{1}{p}-1}(1-t)^{\frac{1}{p}-1}dt\\
      &=\int_{0}^{1}\left(\sum_{m_{1}=0}^{\infty}\frac{(-1)^{m_{1}}}{(2m_{1})!}(\eta_{1}
      t^{\frac{1}{p}})^{2m_{1}}\right)\left(\sum_{m_{2}=0}^{\infty}
      \frac{(-1)^{m_{2}}}{(2m_{2})!}(\eta_{2}(1-t)^{\frac{1}{p}})^{2m_{2}}\right)
      t^{\frac{1}{p}-1}(1-t)^{\frac{1}{p}-1}dt\\
      &=\int_{0}^{1}\left(\sum_{k=0}^{\infty}\sum_{|m|'=k}
      \frac{(-1)^{k}(\eta_{1}t^{\frac{1}{p}})^{2m_{1}}
      (\eta_{2}(1-t)^{\frac{1}{p}})^{2m_{2}}}{(2m_{1})!(2m_{2})!}\right)
      t^{\frac{1}{p}-1}(1-t)^{\frac{1}{p}-1}dt\\
      &=\sum_{k=0}^{\infty}\sum_{|m|'=k}
      \frac{(-1)^{k}\eta_{1}^{2m_{1}}\eta_{2}^{2m_{2}}}{(2m_{1})!(2m_{2})!}
      \int_{0}^{1}t^{\frac{1}{p}(2m_{1}+1)-1}(1-t)^{\frac{1}{p}(2m_{2}+1)-1}dt\\
      &=\sum_{k=0}^{\infty}(-1)^{k}\sum_{|m|'=k}\frac{\eta^{2m}}{(2m)!}
      \frac{\Gamma(\frac{2m_{1}+1}{p})\Gamma(\frac{2m_{2}+1}{p})}{\Gamma(\frac{2(k+1)}
      {p})}.
    \end{align*}   
  \end{proof}
  \begin{remark}
    By the series representation in the conventional Bessel function (see, for example, \cite{Watson})
    \begin{equation*}
      J_{\alpha}(s)=\sum_{k=0}^{\infty}\frac{(-1)^{k}}{k!\Gamma(k+\alpha+1)}
      \left(\frac{s}{2}\right)^{2k+\alpha}\qquad\text{for }s>0,\ \alpha>-\frac{1}{2}
    \end{equation*}
    and the obtained result and (\ref{p=2}), we obtain the following equality.
    \begin{equation}
      \frac{1}{\pi}\sum_{k=0}^{\infty}\frac{(-1)^{k}}{\Gamma(k+1)}
      \sum_{m\in\mathbb{N}_{0}^{2}\ |m|'=k}\frac{\Gamma(\frac{2m+1}{2})}{(2m)!}\eta^{2m}
      =J_{0}^{[2]}(\eta)=J_{0}(|\eta|)=\sum_{k=0}^{\infty}\frac{(-1)^{k}}
      {k!\Gamma(k+1)}\left(\frac{|\eta|}{2}\right)^{2k}.
    \end{equation}
  \end{remark}\vspace{7pt}
  Such a generalization of $J_{0}$ for $p$ has already been studied by D.St.P. Richards\cite{Richards-1985}, \cite{Richards-1986} in more general forms (multidimensional version $J_{n,p}$, especially $J_{2,p}=p\Gamma^{2}(\frac{1}{p})J_{0}^{[p]}$) as \vspace{-5pt}
  \begin{equation*}
    J_{n,p}(x):=\int_{|\xi|_{p}=1}e^{ix\cdot \xi}\omega(\xi)\quad\text{for }x\in\mathbb{R}^{n},\ 
    \text{where }
    \omega(\xi):=\sum_{j=1}^{n}(-1)^{j-1}\xi_{j}d\xi_{1}\cdots d\xi_{j-1}d\xi_{j+1}
    \cdots d\xi_{n},
  \end{equation*}
  though the method is different. Furthermore, by using the Radon transformation, the counterparts of the equality  for the general $n$ cases (\ref{prad-Fourier}) have also been obtained(\cite{Richards-1985}, (2.3)).\par
  In addition, in 1986, Richards derived series expansions of $J_{n,p}$ by using Stokes' theorem(\cite{Richards-1986}, Theorem 4.3). It is clear from the previous proof that this result, especially for $n=2$, corresponds to Proposition \ref{series} and can be derived in two dimensions by the simple argument. Next, in 2008, W. zu Castell presented another proof of the series expansions by using a recurrence formula (of $J_{n,p}$ for dimensions) he had found(\cite{Castell}, Theorems 3-4).\par
  However, while such previous studies exist, the concept of order has not been considered. As mentioned above, note that $J_{n,p}$ can be regarded as the generalized Bessel functions of order $\alpha$ corresponding to the dimension $n$ (in particular, $\alpha=0$ in $\mathbb{R}^{2}$). In addition, one of the important properties of the Bessel functions, the asymptotic evaluation was not presented. \par\vspace{5pt}
  With this background, we found a connection with the lattice point problems of $p$-circle, and performed order addition and asymptotic evaluations for the purpose of solving the problems. Thus, in the latter part of this section, we will make appropriate orderings for $J_{0}^{[p]}$ (the results of asymptotic evaluations will be described in another paper).\par\vspace{5pt}
  Let $B(s,t)$ be the beta function. By taking into account that the well-known equality for the Bessel functions(Lemma 4.13 of \cite{Stein-1971})
  \begin{equation*}
    J_{\alpha+\beta+1}(t)=\frac{t^{\beta+1}}{2^{\beta}\Gamma(\beta+1)}\int_{0}^{1}
    J_{\alpha}(ts)s^{\alpha+1}(1-s^{2})^{\beta}ds\qquad\text{for }\alpha>-\frac{1}{2},\ \beta>-1,\ t>0
  \end{equation*}
  and the transformation for $p>0,\ \beta>-1$
  \begin{align*}
    \int_{0}^{1}&\Bigl(\int_{0}^{1}\cos(\tau\eta_{1}t^{\frac{1}{p}})\cos(\tau\eta_{2}
    (1-t)^{\frac{1}{p}})t^{\frac{1}{p}-1}(1-t)^{\frac{1}{p}-1}dt\Bigr)\tau
    (1-\tau^{p})^{\beta}d\tau\\
    &=\int_{0}^{1}\int_{0}^{1}\Bigl(\sum_{k=0}^{\infty}\sum_{|m|'=k}\frac{(-1)^{k}
    (\tau\eta_{1}t^{\frac{1}{p}})^{2m_{1}}(\tau\eta_{2}(1-t)^{\frac{1}{p}})^{2m_{2}}}
    {(2m)!}\Bigr)t^{\frac{1}{p}-1}(1-t)^{\frac{1}{p}-1}\tau(1-\tau^{p})^{\beta}dtd\tau\\
    &=\sum_{k=0}^{\infty}(-1)^{k}\sum_{|m|'=k}\frac{\eta^{2m}}{(2m)!}B\Bigl(
    \frac{2m_{1}+1}{p},\frac{2m_{2}+1}{p}\Bigr)\frac{1}{p}
    B\Bigl(\frac{2(k+1)}{p},\beta+1\Bigr)\\
    &=\frac{\Gamma(\beta+1)}{p}\sum_{k=0}^{\infty}\frac{(-1)^{k}}{\Gamma(\frac{2(k+1)}
    {p}+\beta+1)}\sum_{|m|'=k}\frac{\Gamma(\frac{2m_{1}+1}{p})\Gamma(\frac{2m_{2}+1}{p})}
    {(2m)!}\eta^{2m},
  \end{align*}
   the following holds.
  \begin{equation}\label{omega_J}
    \frac{|\eta|_{p}^{\beta+1}}{p^{\beta}\Gamma(\beta+1)}\int_{0}^{1}J_{0}^{[p]}
    (\tau\eta)\tau(1-\tau^{p})^{\beta}d\tau
    =\frac{(\frac{|\eta|_{p}}{p})^{\beta+1}(\frac{2}{p})^{2}}{\Gamma^{2}
    (\frac{1}{p})}\sum_{k=0}^{\infty}\frac{(-1)^{k}}{\Gamma(\frac{2(k+1)}{p}+\beta+1)}
    \sum_{m\in\mathbb{N}_{0}^{2}\ |m|'=k}\frac{\Gamma(\frac{2m+1}{p})}{(2m)!}\eta^{2m}.
  \end{equation}\par
  Thus, based on the above, the definition of generalized Bessel functions of order non-negative $J_{\omega}^{[p]}$ and their series expansions are given as follows.
  \begin{definition}\label{I-Om}\itshape{
    \begin{equation*}
      J_{\omega}^{[p]}(\eta):=
      \begin{cases}
        \frac{|\eta|_{p}^{\omega}}{p^{\omega-1}\Gamma(\omega)}\int_{0}^{1}
        J_{0}^{[p]}(\tau\eta)\tau(1-\tau^{p})^{\omega-1}
        d\tau\qquad\text{if }\omega>0,\\
        J_{0}^{[p]}(\eta)\qquad\text{if }\omega=0.
      \end{cases}
    \end{equation*}}
  \end{definition}  
  \begin{proposition}[\itshape{The series expansions of $J_{\omega}^{[p]}$: Generalizations of Proposition \ref{series} to order}]\label{S-Om}\itshape{}
    \begin{equation*}
      J_{\omega}^{[p]}(\eta)=\frac{(\frac{|\eta|_{p}}{p})^{\omega}(\frac{2}{p})^{2}}
      {\Gamma^{2}(\frac{1}{p})}\sum_{k=0}^{\infty}\frac{(-1)^{k}}{\Gamma(\frac{2(k+1)}{p}
      +\omega)}\sum_{m\in\mathbb{N}_{0}^{2}\ |m|'=k}\frac{\Gamma(\frac{2m+1}{p})}{(2m)!}
      \eta^{2m}\qquad\text{for }\omega\geq0.
    \end{equation*}
  \end{proposition}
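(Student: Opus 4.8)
The plan is to read off the asserted expansion as an immediate consequence of the identity (\ref{omega_J}), which has already been established, by matching it term for term against Definition \ref{I-Om}. First I would fix $\omega>0$ and put $\beta=\omega-1$, so that $\beta>-1$. With this substitution the left-hand side of (\ref{omega_J}) becomes
\[
  \frac{|\eta|_{p}^{\omega}}{p^{\omega-1}\Gamma(\omega)}\int_{0}^{1}
  J_{0}^{[p]}(\tau\eta)\,\tau(1-\tau^{p})^{\omega-1}\,d\tau,
\]
which is exactly $J_{\omega}^{[p]}(\eta)$ by the first branch of Definition \ref{I-Om}; simultaneously the right-hand side of (\ref{omega_J}), now carrying $\beta+1=\omega$, is verbatim the series claimed in Proposition \ref{S-Om}. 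Hence for every $\omega>0$ the statement reduces to a bookkeeping check that the powers of $|\eta|_{p}$ and $p$ and the gamma-argument $\frac{2(k+1)}{p}+\omega$ correspond correctly, which they do.

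It then remains only to treat the boundary value $\omega=0$ separately, since (\ref{omega_J}) was derived for $\beta>-1$, i.e. $\omega>0$. Here the second branch of Definition \ref{I-Om} defines $J_{0}^{[p]}$, and setting $\omega=0$ in the formula of Proposition \ref{S-Om} turns the prefactor $(|\eta|_{p}/p)^{\omega}$ into $1$ and the denominator into $\Gamma(\frac{2(k+1)}{p})$; what remains is precisely the expansion of $J_{0}^{[p]}$ proved in Proposition \ref{series}. Thus the two cases of Definition \ref{I-Om} knit together into the single formula asserted for all $\omega\geq0$.

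The only genuinely analytic point, and the one I would flag as the main (albeit minor) obstacle, is the term-by-term integration underlying (\ref{omega_J}): one expands both cosine factors in their Maclaurin series, forms the Cauchy product, and integrates against the weight $\tau(1-\tau^{p})^{\omega-1}t^{\frac{1}{p}-1}(1-t)^{\frac{1}{p}-1}$. For fixed $\eta$ this product series converges uniformly on the compact square $(t,\tau)\in[0,1]^{2}$ and the weight is integrable there because $\omega>0$ and $p>0$, so the interchange of summation and the double integral is justified; the resulting inner integrals evaluate to $B(\frac{2m_{1}+1}{p},\frac{2m_{2}+1}{p})$ in $t$ and to $\frac{1}{p}B(\frac{2(k+1)}{p},\omega)$ in $\tau$ (the latter after the substitution $u=\tau^{p}$), and rewriting these beta values through the gamma function yields the coefficients displayed above. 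Since this justification is already carried out in the derivation of (\ref{omega_J}) and in the proof of Proposition \ref{series}, I expect no further difficulty: the proposition is essentially the relabelling $\beta=\omega-1$ of an identity in hand, augmented by the direct identification of the $\omega=0$ case with Proposition \ref{series}.
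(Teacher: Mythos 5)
Your proposal is correct and follows essentially the same route as the paper: the paper presents Proposition \ref{S-Om} as an immediate consequence of the identity (\ref{omega_J}) (derived via Cauchy product, term-by-term integration, and beta-function evaluation) combined with Definition \ref{I-Om}, which is exactly your relabelling $\beta=\omega-1$ for $\omega>0$, with the case $\omega=0$ reducing to Proposition \ref{series}. Your additional remark justifying the interchange of summation and integration is a point the paper leaves implicit, but it is the same underlying argument.
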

  Then, since (\ref{p=2}) and the following hold, we can see that $J_{\omega}^{[p]}$ are generalizations of $J_{\omega}$ for non-negative order $\omega$, which preserve spherical symmetry.
  \begin{align}
    J_{\omega}^{[2]}(\eta)&=\frac{|\eta|^{\omega}}{2^{\omega-1}\Gamma(\omega)}
      \int_{0}^{1}J_{0}^{[2]}(\tau\eta)\tau(1-\tau^{2})^{\omega-1}d\tau\notag\\
      &=\frac{|\eta|^{\omega}}{2^{\omega-1}\Gamma(\omega)}
      \int_{0}^{1}J_{0}(|\tau\eta|)\tau(1-\tau^{2})^{\omega-1}d\tau\notag\\
      &=\frac{|\eta|^{\omega}}{2^{\omega-1}\Gamma(\omega)}\sum_{m=0}^{\infty}
      \frac{(-1)^{m}}{m!\Gamma(m+1)}\Bigl(\frac{|\eta|}{2}\Bigr)^{2m}\int_{0}^{1}
      \tau^{2m+1}(1-\tau^{2})^{\omega-1}d\tau\notag\\
      &=\frac{|\eta|^{\omega}}{2^{\omega-1}\Gamma(\omega)}\sum_{m=0}^{\infty}
      \frac{(-1)^{m}}{m!\Gamma(m+1)}\Bigl(\frac{|\eta|}{2}\Bigr)^{2m}
      \frac{B(m+1,\omega)}{2}\notag\\
      &=\sum_{m=0}^{\infty}\frac{(-1)^{m}}{m!\Gamma(m+\omega+1)}
      \Bigl(\frac{|\eta|}{2}\Bigr)^{2m+\omega}=J_{\omega}(|\eta|).\label{J_closed}
    \end{align} 
  \begin{remark}
  Proposition \ref{S-Om} is consistent with the results of Richards and Castell for $n=2$ (\cite{Richards-1986}, Theorem 4.3 or \cite{Castell}, Theorem 4), and it is a generalization of the generalized Bessel function based on $p$-radial $J_{0}^{[p]}$ for non-negative order $\omega$ in the two-dimensional case.
  \end{remark}
  \section{Proof of Theorem \ref{thm}}
  \hspace{13pt}Let $a>0$ and $\beta>-1$. Then, the generalized Hardy's identity (a generalized version of the equality (\ref{HI}) in harmonic analysis), which plays a very important role in Kuratsubo and Nakai's paper\cite{Kuratsubo-2022}, is described by the functions 
  \begin{equation*}
      U_{\beta,a}(x):=
        \begin{cases}
          (a^{2}-|x|^{2})^{\beta} & \text{if }x\in\mathbb{R}^{2},\ |x|<a,\\
          0 & \text{if }x\in\mathbb{R}^{2},\ |x|\geq a.
        \end{cases}
  \end{equation*}
  \hspace{13pt}Let us generalize $U_{\beta,a}$ for $p>0$ as follows.
  \begin{equation*}
    U_{\beta,a}^{[p]}(x):=
    \begin{cases}
      (a^{p}-|x|_{p}^{p})^{\beta} & \text{if }x\in\mathbb{R}^{2},\ |x|_{p}<a,\\
      0 & \text{if }x\in\mathbb{R}^{2},\ |x|_{p}\geq a.
    \end{cases}
  \end{equation*}
  Then $U_{\beta,a}^{[p]}$ are $p$-radial, and in particular $U^{[p]}_{0,a}$ are indicator functions on the $p$-circle. We also note that from the assumption of $\beta$ it is easy to check that $U_{\beta,a}^{[p]}$ is integrable on $\mathbb{R}^{2}$, and can define the Fourier transform $\hat{U}^{[p]}_{\beta,a}$ as follows:
    \begin{equation*}
      \hat{U}^{[p]}_{\beta,a}(\xi):=\int_{\mathbb{R}^{2}}U_{\beta,a}^{[p]}(x)e^{-2\pi i\xi\cdot x}dx \qquad\text{for }\xi\in\mathbb{R}^{2}.
    \end{equation*}\par
  Next, we derive a representation of $\mathcal{D}_{\beta}^{[p]}$(see (\ref{Dp}) for definition) by $J_{\omega}^{[p]}$. 
  \begin{proposition}\label{mathD_J}
    \itshape{Let $p>0$ and $\beta>-1$. Then, the following holds.}
    \begin{equation*}
      \mathcal{D}_{\beta}^{[p]}(s:x)=s^{\beta+\frac{2}{p}}p^{\beta+1}\Gamma^{2}(\frac{1}
      {p})\frac{J_{\beta+1}^{[p]}(2\pi\sqrt[p]{s}x)}{(2\pi \sqrt[p]{s}|x|_{p}
      )^{\beta+1}}\qquad\text{for }s>0,\ x\in\mathbb{R}^{2}.
    \end{equation*}
  \end{proposition}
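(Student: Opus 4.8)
The plan is to recognize $\mathcal{D}_{\beta}^{[p]}(s:x)$ as a constant multiple of the Fourier transform of the function $U_{\beta,a}^{[p]}$ introduced just above, with $a=\sqrt[p]{s}$, and then to evaluate that transform through the $p$-radial integral representation (\ref{prad-Fourier}). Writing the integration variable in (\ref{Dp}) as $\xi$, the cut-off condition $|\xi|_{p}^{p}<s$ is exactly $|\xi|_{p}<a$, so the integrand of $\mathcal{D}_{\beta}^{[p]}(s:x)$ is precisely $\frac{1}{\Gamma(\beta+1)}U_{\beta,a}^{[p]}(\xi)$, and hence $\mathcal{D}_{\beta}^{[p]}(s:x)=\frac{1}{\Gamma(\beta+1)}\hat{U}_{\beta,a}^{[p]}(-x)$. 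The one point to watch is the sign of the exponent: definition (\ref{Dp}) carries $e^{+2\pi ix\cdot\xi}$ whereas $\hat{U}_{\beta,a}^{[p]}$ carries $e^{-2\pi ix\cdot\xi}$, but since $|{-\xi}|_{p}=|\xi|_{p}$ the function $U_{\beta,a}^{[p]}$ is even, so its transform is even and $\hat{U}_{\beta,a}^{[p]}(-x)=\hat{U}_{\beta,a}^{[p]}(x)$; the sign is therefore harmless.

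Next I would apply (\ref{prad-Fourier}). The function $U_{\beta,a}^{[p]}$ is $p$-radial with profile $\phi(r)=(a^{p}-r^{p})^{\beta}$ on $[0,a)$ and $\phi(r)=0$ on $[a,\infty)$, and it is integrable for $\beta>-1$ (as already remarked), so (\ref{prad-Fourier}) gives
\[
  \hat{U}_{\beta,a}^{[p]}(x)=p\,\Gamma^{2}(\tfrac{1}{p})\int_{0}^{a}J_{0}^{[p]}(2\pi rx)(a^{p}-r^{p})^{\beta}r\,dr.
\]
The task is then to reshape this radial integral into the defining integral of $J_{\beta+1}^{[p]}$. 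Substituting $r=a\tau$ extracts the factor $a^{p\beta+2}$ and turns the integral into $\int_{0}^{1}J_{0}^{[p]}(2\pi a\tau x)\,\tau(1-\tau^{p})^{\beta}\,d\tau$, which is exactly the integral appearing in Definition \ref{I-Om} for $J_{\omega}^{[p]}$ with order $\omega=\beta+1$ and argument $\eta=2\pi ax$.

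Finally, reading Definition \ref{I-Om} backwards with $|\eta|_{p}=2\pi a|x|_{p}$ replaces that integral by $\dfrac{p^{\beta}\Gamma(\beta+1)}{(2\pi a|x|_{p})^{\beta+1}}J_{\beta+1}^{[p]}(2\pi ax)$. Collecting constants, the prefactor becomes $p^{\beta+1}\Gamma^{2}(\tfrac{1}{p})\Gamma(\beta+1)a^{p\beta+2}$; dividing by $\Gamma(\beta+1)$ and using $a=\sqrt[p]{s}$, so that $a^{p\beta+2}=s^{\beta+\frac{2}{p}}$ and $2\pi a=2\pi\sqrt[p]{s}$, yields precisely the claimed formula. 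The computation is essentially bookkeeping; the only genuinely substantive steps are the justification that (\ref{prad-Fourier}) applies (which rests on the integrability of $U_{\beta,a}^{[p]}$ for $\beta>-1$) and the correct handling of the exponential sign via the evenness of $U_{\beta,a}^{[p]}$, so I expect no serious obstacle beyond keeping the constants straight.
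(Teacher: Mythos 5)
Your proof is correct and takes essentially the same route as the paper: both identify $\mathcal{D}_{\beta}^{[p]}(s:x)$ with $\frac{1}{\Gamma(\beta+1)}\hat{U}_{\beta,\sqrt[p]{s}}^{[p]}(x)$, apply the $p$-radial transform formula (\ref{prad-Fourier}), and read off $J_{\beta+1}^{[p]}$ from Definition \ref{I-Om} with $\omega=\beta+1$. The only cosmetic difference is that you handle the dilation by substituting $r=a\tau$ inside the radial integral, whereas the paper first computes the case $a=1$ and then scales via the Fourier dilation property (introducing the auxiliary functions $\Phi_{a}^{[p]}$, $\Psi^{[p]}$); your explicit justification of the exponential sign by the evenness of $U_{\beta,a}^{[p]}$ is a point the paper leaves implicit.
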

  \begin{proof}
    By display (\ref{prad-Fourier}) and Definition \ref{I-Om}, it can be displayed as follows.
    \begin{align}
      \Gamma(\beta+1)\mathcal{D}_{\beta}^{[p]}(1:x)=\hat{U}_{\beta,1}^{[p]}(x)
      &=p\Gamma^{2}(\frac{1}{p})\int_{0}^{1}J_{0}^{[p]}(2\pi rx)r(1-r^{p})^{\beta}dr
      \notag\\
      &=p^{\beta+1}\Gamma(\beta+1)\Gamma^{2}(\frac{1}{p})\frac{J_{\beta+1}^{[p]}
      (2\pi x)}{|2\pi x|_{p}^{\beta+1}}=:\Psi^{[p]}(x)\label{Psi}.
    \end{align}\par
    In addition, let $\Phi^{[p]}:=U_{\beta,1}^{[p]}$ (that is, $\hat{\Phi}^{[p]}=\Psi^{[p]}$), 
    $\Phi_{a}^{[p]}(x):=a^{p\beta}\Phi^{[p]}\left(\frac{x}{a}\right)\text{ for }\beta>-1,\ a>0$. Then, it can be expressed as
    \begin{equation*}
      \Phi_{a}^{[p]}(x)=
      \begin{cases}
        a^{p\beta}(1-|\frac{x}{a}|_{p}^{p})^{\beta}=(a^{p}-|x|_{p}^{p})^{\beta} & 
        (|x|_{p}<a),\\
        0 & (|x|_{p}\geq a),
      \end{cases}
    \end{equation*}
    that is, $\Phi_{a}^{[p]}=U_{\beta,a}^{[p]}$ and $\hat{\Phi}_{a}^{[p]}=\hat{U}_{\beta,a}^{[p]}$. On the other hand, it can also be expressed as
    \begin{equation*}
      \hat{\Phi}_{a}^{[p]}(x)=a^{p\beta}\Bigl|\frac{1}{a}\Bigr|^{-2}\hat{\Phi}^{[p]}(ax)
      =a^{p\beta+2}\Psi^{[p]}(ax)
      =a^{p\beta+2}p^{\beta+1}\Gamma(\beta+1)\Gamma^{2}(\frac{1}{p})
      \frac{J_{\beta+1}^{[p]}(2\pi ax)}{|2\pi ax|_{p}^{\beta+1}},
    \end{equation*}
    and thus we obtain the desired representation of $\mathcal{D}_{\beta}^{[p]}$ from the above. 
    \begin{align*}
      \mathcal{D}_{\beta}^{[p]}(s:x)&=\frac{1}{\Gamma(\beta+1)}\int_{|\xi|_{p}^{p}
      <s}(s-|\xi|_{p}^{p})^{\beta}e^{2\pi ix\cdot\xi}d\xi=\frac{1}{\Gamma(\beta+1)}
      \hat{U}_{\beta,\sqrt[p]{s}}^{[p]}(x)\\
      &=s^{\beta+\frac{2}{p}}p^{\beta+1}\Gamma^{2}(\frac{1}{p})
      \frac{J_{\beta+1}^{[p]}(2\pi\sqrt[p]{s}x)}{(2\pi \sqrt[p]{s}|x|_{p})^{\beta+1}}
      \qquad\text{ for }s>0,\ x\in\mathbb{R}^{2}.
    \end{align*}
  \end{proof}
  \begin{remark}
    The representation of Proposition $\ref{mathD_J}$ is a generalization of that (2.5) in \cite{Kuratsubo-2022}
    \begin{equation*}
      \mathcal{D}_{\beta}(s:x)=s^{\beta+1}2^{\beta+1}\pi
      \frac{J_{\beta+1}(2\pi\sqrt{s}|x|)}{(2\pi\sqrt{s}|x|)^{\beta+1}}.
    \end{equation*}\par
    In fact, in the case $p=2$, it is clear from (\ref{J_closed}) that we can express
    \begin{equation*}
      \mathcal{D}_{\beta}^{[2]}(s:x)=s^{\beta+1}2^{\beta+1}\pi
      \frac{J_{\beta+1}^{[2]}(2\pi\sqrt{s}x)}{(2\pi\sqrt{s}|x|)^{\beta+1}}
      =s^{\beta+1}2^{\beta+1}\pi\frac{J_{\beta+1}(2\pi\sqrt{s}|x|)}{(2\pi\sqrt{s}
      |x|)^{\beta+1}}.
    \end{equation*}
  \end{remark}
  \vspace{10pt}\par
  Finally, the proof of the main result is completed by introducing the following formula, which is well known as the periodization of integrable functions.
  \begin{lemma}[\itshape{Poisson summmation formula: \cite{Stein-1971}, Theorem 2.4}]\label{Psf}\itshape{
    For a function $F$ integrable on $\mathbb{R}^{d}\ (d\in\mathbb{N})$, the series $f(x):=\sum_{m\in\mathbb{Z}^{d}}F(x+m)$ converges in the $L^{1}$-norm of $\mathbb{T}^{d}(:=(-\frac{1}{2},\frac{1}{2}]^{d})$ and is integrable on $\mathbb{T}^{d}$, and the following holds.}
    \begin{equation*}
      f(x)=\sum_{m\in\mathbb{Z}^{d}}\hat{F}(m)e^{2\pi ix\cdot m}\quad\text{ for }x\in\mathbb{T}^{d},
      \qquad\text{ that is, }\hat{F}(m)=\hat{f}(m).
    \end{equation*}
  \end{lemma}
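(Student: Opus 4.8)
The plan is to treat the statement as the periodization form of the Poisson summation formula and to prove it in two stages: first that the periodized series defines an integrable function $f$ on $\mathbb{T}^{d}$ with convergence in $L^{1}$, and then that the Fourier coefficients of $f$ coincide with the samples $\hat{F}(m)$ of the Fourier transform at the integer lattice. The final displayed identity is then simply the assertion that the Fourier series of $f$ is $\sum_{m}\hat{F}(m)e^{2\pi ix\cdot m}$, read through the equality of coefficients $\hat{f}(m)=\hat{F}(m)$.

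First I would verify the $L^{1}$-convergence. Since the translates $\{m+\mathbb{T}^{d}\}_{m\in\mathbb{Z}^{d}}$ tile $\mathbb{R}^{d}$ up to a null set, Tonelli's theorem (applied to the nonnegative integrand $|F(x+m)|$, together with the substitution $y=x+m$) gives
\begin{equation*}
  \int_{\mathbb{T}^{d}}\sum_{m\in\mathbb{Z}^{d}}|F(x+m)|\,dx
  =\sum_{m\in\mathbb{Z}^{d}}\int_{m+\mathbb{T}^{d}}|F(y)|\,dy
  =\int_{\mathbb{R}^{d}}|F(y)|\,dy<\infty,
\end{equation*}
because $F$ is integrable on $\mathbb{R}^{d}$. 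Hence $\sum_{m}|F(x+m)|$ is finite for almost every $x\in\mathbb{T}^{d}$, so the series $f(x)=\sum_{m}F(x+m)$ converges absolutely a.e.; moreover its partial sums are dominated in $L^{1}(\mathbb{T}^{d})$ by the integrable majorant $\sum_{m}|F(\cdot+m)|$, so they converge to $f$ in $L^{1}(\mathbb{T}^{d})$ and $f$ is integrable on $\mathbb{T}^{d}$.

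Next I would compute the Fourier coefficients of $f$. Fix $n\in\mathbb{Z}^{d}$. Because the series converges in $L^{1}(\mathbb{T}^{d})$, I may interchange summation and integration, then substitute $y=x+m$ and exploit the lattice periodicity of the exponential:
\begin{align*}
  \hat{f}(n)
  &=\int_{\mathbb{T}^{d}}f(x)e^{-2\pi in\cdot x}\,dx
  =\sum_{m\in\mathbb{Z}^{d}}\int_{\mathbb{T}^{d}}F(x+m)e^{-2\pi in\cdot x}\,dx\\
  &=\sum_{m\in\mathbb{Z}^{d}}\int_{m+\mathbb{T}^{d}}F(y)e^{-2\pi in\cdot(y-m)}\,dy
  =\int_{\mathbb{R}^{d}}F(y)e^{-2\pi in\cdot y}\,dy=\hat{F}(n),
\end{align*}
where the crucial simplification is that $e^{2\pi in\cdot m}=1$ since $n\cdot m\in\mathbb{Z}$, which removes the dependence on $m$ in the phase. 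This yields $\hat{f}(n)=\hat{F}(n)$ for every $n\in\mathbb{Z}^{d}$, which is precisely the equality of coefficients asserted in the lemma, and the displayed series identity is then the Fourier-series representation of the integrable function $f$ carrying these coefficients.

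The main obstacle is the justification of the interchange of $\sum_{m}$ and $\int_{\mathbb{T}^{d}}$ in the coefficient computation, which is exactly what the $L^{1}$-convergence established in the first step supplies (equivalently one may invoke Fubini directly, the absolute integrability having already been checked by Tonelli). A secondary point to keep in mind is the reading of the final equation: for a general integrable $F$ the right-hand series need not converge pointwise, so the equality $f(x)=\sum_{m}\hat{F}(m)e^{2\pi ix\cdot m}$ is to be understood as the Fourier-series representation of $f$, i.e.\ as the statement $\hat{f}(m)=\hat{F}(m)$ made explicit in the lemma, and no stronger pointwise convergence is claimed or needed.
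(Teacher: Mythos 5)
Your proof is correct and is essentially the same argument as the source: the paper does not prove this lemma itself but imports it from \cite{Stein-1971} (Theorem 2.4 of Chapter VII), whose proof proceeds exactly as yours --- Tonelli over the tiling $\mathbb{R}^{d}=\bigcup_{m}(m+\mathbb{T}^{d})$ to get $L^{1}$-convergence of the periodization, then unfolding the integral and using $e^{2\pi in\cdot m}=1$ to identify $\hat{f}(n)=\hat{F}(n)$. Your closing caveat, that the displayed identity is to be read as the equality of Fourier coefficients rather than as pointwise convergence of the series, is precisely the intended reading of the lemma's ``that is'' clause and matches the formulation in the cited reference.
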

  
  \begin{proof}[Proof of Theorem \ref{thm}]
  Assume $\beta>-1$ such that $\mathcal{D}_{\beta}^{[p]}(1:x)$ (that is, $\Psi^{[p]}(x)$) is integrable on $\mathbb{R}^{2}$.\par
  Firstly, under this assumption, the right-hand side series converges absolutely for $x\in\mathbb{T}^{2}$. \par
  
  In fact, for $n\neq0$ and $x\in\mathbb{T}^{2}$, since
  \begin{equation*}
  |x-n|_{p}^{p}\geq|x_{j}-n_{j}|^{p}\geq(|n_{j}| -|x_{j}|)^{p}\geq\Bigl(1-\frac{1}{2}\Bigr)^{p}=\Bigl(\frac{1}{2}\Bigr)^{p}
  \qquad\text{for }j=1\text{ or }2, 
  \end{equation*}
  that is, $|x-n|_{p}\geq\frac{1}{2}$ holds, from Proposition \ref{mathD_J} and the integrable assumption, the following holds.
    \begin{align*}
      \sum_{n\in\mathbb{Z}^{2}\setminus\{0\}}\Bigl|\frac{J_{\beta+1}^{[p]}(2\pi\sqrt[p]{s}(x-n))}{(2\pi\sqrt[p]{s}|x-n|_{p})^{\beta+1}}\Bigr|&\leq\int_{\frac{1}{2}\leq|x-y|_{p}}\frac{|J_{\beta+1}^{[p]}(2\pi\sqrt[p]{s}(x-y))|}{(2\pi\sqrt[p]{s}|x-y|_{p})^{\beta+1}}dy\\
      &=\int_{\frac{\sqrt[p]{s}}{2}\leq|y'|_{p}}\frac{|J_{\beta+1}^{[p]}(2\pi y')|}{(2\pi|y'|_{p})}s^{-\frac{2}{p}}dy'\\
      &=C_{\beta,s}^{[p]}\int_{\frac{\sqrt[p]{s}}{2}\leq|y'|_{p}}|\mathcal{D}_{\beta}^{[p]}(1:y')|dy'<+\infty.
    \end{align*}\par
    
  Now, let $\Psi_{a}^{[p]}(x):=a^{2+p\beta}\Psi^{[p]}(ax)$ by (\ref{Psi}). Then, from the Fourier inverse transform, 
  \begin{equation*}
    \hat{\Psi}_{a}^{[p]}(x)=a^{2+p\beta}\Bigl(\frac{1}{a^{2}}\Bigr)\hat{\Psi}^{[p]}
    (\frac{x}{a})
    =a^{p\beta}\int_{\mathbb{R}^{2}}\Psi^{[p]}(\xi)e^{-2\pi i\xi\cdot\frac{x}{a}}d\xi
    =a^{p\beta}\int_{\mathbb{R}^{2}}\hat{\Phi}^{[p]}(\xi)e^{-2\pi i\xi\cdot\frac{x}{a}}
    d\xi
    =\Phi_{a}^{[p]}(x)
  \end{equation*}
  holds. Furthermore, from an equation 
  \begin{equation*}
    \Gamma(\beta+1)\mathcal{D}_{\beta}^{[p]}(1:s^{\frac{1}{p}}(x-n))
    =\int_{|\xi|_{p}<1}(1-|\xi|_{p}^{p})^{\beta}e^{2\pi is^{\frac{1}{p}}(x-n)\cdot\xi}
    d\xi
    =\int_{|\eta|_{p}^{p}<s}(1-\frac{1}{s}|\eta|_{p}^{p})^{\beta}e^{2\pi i(x-n)\cdot\eta}
    s^{-\frac{2}{p}}d\eta
  \end{equation*}
  and the assumption and Poisson summation formula (Lemma \ref{Psf}), its periodization $\sum_{n\in\mathbb{Z}^{2}}\Psi_{a}^{[p]}(x-n)$ is integrable on $\mathbb{T}^{2}$, and the following holds.
  \begin{align*}
    \Gamma(\beta+1)\sum_{n\in\mathbb{Z}^{2}}\mathcal{D}_{\beta}^{[p]}(s:x-n)
    &=\Gamma(\beta+1)\sum_{n\in\mathbb{Z}^{2}}s^{\frac{2}{p}+\beta}
    \mathcal{D}_{\beta}^{[p]}(1:s^{\frac{1}{p}}(x-n))\\
    &=\sum_{n\in\mathbb{Z}^{2}}s^{\frac{2}{p}+\beta}\Psi^{[p]}(s^{\frac{1}{p}}(x-n))\\
    &=\sum_{n\in\mathbb{Z}^{2}}\Psi_{\sqrt[p]{s}}^{[p]}(x-n)\\
    &=\sum_{n\in\mathbb{Z}^{2}}\hat{\Psi}_{\sqrt[p]{s}}^{[p]}(n)e^{2\pi in\cdot x}\\
    &=\sum_{|n|_{p}^{p}<s}(s-|n|_{p}^{p})^{\beta}e^{2\pi in\cdot x}\\
    &=\Gamma(\beta+1)D_{\beta}^{[p]}(s:x).
  \end{align*}
  Therefore, from the above and Proposition \ref{mathD_J}, we obtain the desired representation as a conclusion. 
  \begin{align*}
   D_{\beta}^{[p]}(s:x)-\mathcal{D}_{\beta}^{[p]}(s:x)
    &=\sum_{n\in\mathbb{Z}^{2}\setminus\{0\}}\mathcal{D}_{\beta}^{[p]}(s:x-n)\\
    &=s^{\beta+\frac{2}{p}}p^{\beta+1}\Gamma^{2}(\frac{1}{p})
    \sum_{n\in\mathbb{Z}^{2}\setminus\{0\}}\frac{J_{\beta+1}^{[p]}
    (2\pi\sqrt[p]{s}(x-n))}{(2\pi \sqrt[p]{s}|x-n|_{p})^{\beta+1}}
    \quad\text{for }s>0,\ x\in\mathbb{R}^{2}.
  \end{align*}
  \end{proof}
  \section{Concluding remarks}
  \hspace{13pt}In this paper, we have succeeded in generalizing the function representation (\ref{D-J}) by S. Kuratsubo and E. Nakai \cite{Kuratsubo-2022} for $p$ as Theorem \ref{thm}, which is just the initial step toward the present goal of improving the evaluations of the lattice points error term of the $p$-circle for the unsolved cases of $p$. In view of the fact that the evaluation formula of the error term $P_{2}$ was obtained as Proposition \ref{prop2022} by the display (\ref{D-J}) and various properties of the Bessel functions in \cite{Kuratsubo-2022}, in order to tackle the problem by this method, it is necessary to investigate the generalized Bessel function $J_{\omega}^{[p]}$. More specifically, as the next step, we would like to obtain uniform asymptotic evaluations of $J_{\omega}^{[p]}$, noting that the equality (\ref{Psi}) holds, in order to identify the existence of $\beta$ and its infimum such that the assumption of Theorem \ref{thm} (that is, $\mathcal{D}_{\beta}^{[p]}(1:x)$ is integrable on $\mathbb{R}^{2}$) is satisfied.\par\vspace{5pt}
  For example, in the case $p=2$, the equality (\ref{J_closed}) and the asymptotic formula of the Bessel functions (\cite{Watson}, p199; (1)) 
  \begin{equation*}\label{asym.}
    J_{\alpha}(s)=\sqrt{\frac{2}{\pi s}}\cos\Bigl(s-\frac{2\alpha+1}{4}\pi\Bigr)+\mathcal{O}
    (s^{-\frac{3}{2}})
  \end{equation*}
  yield the evaluation formula
  \begin{equation*}
    \frac{J_{\beta+1}^{[2]}(x)}{|x|^{\beta+1}}=\frac{J_{\beta+1}(|x|)}{|x|^{\beta+1}}
    =\mathcal{O}(|x|^{-(\beta+\frac{3}{2})}).
  \end{equation*}\par
  Then, from the continuity of $\frac{J_{\beta+1}(s)}{s^{\beta+1}}$ (the value at the origin is defined by the limit $\frac{1}{2^{\beta+1}\Gamma(\beta+2)}$), $\beta>\frac{1}{2}$ is given as a sufficient condition satisfying the assumption of Theorem \ref{thm}.
\par\vspace{5pt}
  Therefore, following the method of identifying infimum of $\beta$ for the special case $p=2$, we need to derive the asymptotic formulas for the generalized Bessel function $J_{\omega}^{[p]}$ if we consider the cases $p\neq2$ broadly.\par
  As already mentioned in section 1, as for the functions $J_{0}^{[p]}$, we have already obtained uniformly asymptotic estimates on compact sets on quadrants of $\mathbb{R}^{2}$ for the cases $0<p\leq1$ or $p=2$, and in particular uniformly asymptotic estimates on $\mathbb{R}^{2}$ for the cases such that $\frac{2}{p}$ are the natural numbers, which is the content of the paper\cite{K2}.\par
  Therefore, as a further step, based on the method leading to this result, we plan to derive the general form for $p$ of the conventional oscillatory integral representation of the Bessel functions (see, e.g., \cite{Stein-1993}, p338; (13)) in order to apply Van der Corput's lemma (\cite{Stein-1993}, p334; Corollary) and to obtain asymptotic estimates of $J_{\omega}^{[p]}$ for positive order $\omega$.  

  \section*{Acknowledgement}
  \hspace{13pt}I would like to express my gratitude to Prof. Mitsuru Sugimoto and Prof. Kohji Matsumoto for numerous constructive suggestions and helpful remarks on harmonic analysis and number theory. \par
The author is financially supported by JST SPRING, Grant Number JPMJSP2125, and would like to take this opportunity to thank the ``THERS Make New Standards Program for the Next Generation Researchers.'' 
  \addcontentsline{toc}{section}{References}
  
  \itshape{
  \hspace{13pt}The author's affiliation: Graduate School of Mathematics, Nagoya University, Chikusa-ku, Nagoya 464-8602, Japan\\
  \hspace{13pt}The author's email address: kitajima.masaya.z5@s.mail.nagoya-u.ac.jp}
\end{document}